\let\rsf\mathscr
\def\dblCaret{\mathbin{\hbox{$\hat{\hphantom{m}}$}\kern-10pt\hbox{$\hat{\hphantom{m}}$}}}
\def\eqcl[#1]{\pmb{[}#1\pmb{]}}
\def\one{\mathbf{1}}
\def\zero{\mathbf{0}}
\def\env{\operatorname{env}}
\def\cork{\operatorname{corank}}
\newcommand{\card}[1]{\left| #1\right|}
\title{Finite Implication Algebras}
\author{Colin G. Bailey}
\address{School of Mathematics, Statistics \& Operations Research\\
Victoria University of Wellington\\
Wellington, New Zealand\\
}
\email{Colin.Bailey@vuw.ac.nz}
\author{Joseph S.Oliveira}
\address{
Pacific Northwest National Laboratories\\
Richland, WA\\
U.S.A.}
\email{Joseph.Oliveira@pnl.gov}
\date{\today}
\subjclass{06A12, 05C65}
\keywords{Implication algebras,  hypergraphs,  Boolean polymatroids}
\providecommand{\meet}{\mathbin{\wedge}}
\providecommand{\join}{\mathbin{\vee}}
\newcommand{\comp}[1]{\overline{#1}}
     \def\restrict{\hbox{\rm\kern0.166em\accent"12\kern-0.536em$\vert$\kern0.3em}}%
     \def\restrict{\upharpoonright}%
\def\twoSet#1#2{\left\{%
\vphantom{#2}#1\thinspace\right|\nolinebreak[3]\left.%
  #2%
  \vphantom{#1}%
  \right\}%
}
\def\oneSet#1{\left\lbrace#1\right\rbrace}
\newif\if@nstr
\def\setstrfalse{\let\if@nstr=\iffalse}
\def\setstrtrue{\let\if@nstr=\iftrue}
\def\@nstr #1#2{
\def\@@nstr ##1#1##2##3\@@nstr{\ifx
\@nstr ##2\setstrfalse \else \setstrtrue \fi }
\@@nstr #2#1\@nstr \@@nstr}
\def\@separate#1|#2@{\setFront{#1}\setBack{#2}}
\def\lb#1\rb{\@nstr|{#1} \if@nstr \@separate#1 @ \twoSet{\@setFront}{\@setBack}%
\else \@separate |{#1 }@ \oneSet{\@setBack}\fi%
}
\def\setFront#1{\def\@setFront{#1}}
\def\setBack#1{\def\@setBack{#1}}
\def\Set#1{\lb{#1}\rb}
\def\oneBrk#1{\left\langle#1\right\rangle}
\def\twoBrk#1#2{\left\langle%
\vphantom{#2}#1\thinspace\right|\nolinebreak[3]\left.%
  #2%
  \vphantom{#1}%
  \right\rangle%
}
\def\brk<#1>{\@nstr|{#1} \if@nstr \@separate#1 @ \twoBrk{\@setFront}{\@setBack}%
\else \@separate |{#1 }@ \oneBrk{\@setBack}\fi%
}
\def\thmref#1{\normalfont{theorem}~\ref{#1}}
\def\lemref#1{\normalfont{lemma}~\ref{#1}}
\theoremstyle{plain}
\newtheorem{thm}{Theorem}[section]
\newtheorem{lem}[thm]{Lemma}
\newtheorem{defn}[thm]{Definition}
\theoremstyle{remark}
{}
{}
{}
{}
\newcommand{\N}{{\mathbbm{N}}}
\newcommand{\N}{{\mathbb{N}}}
\begin{document}

\begin{abstract}
	We consider several distinct characterizations of finite 
	implication algebras. One of these leads to a new 
	characterization of Boolean polymatroids.
\end{abstract}
\maketitle

\section{Introduction}
Implication algebras were introduced by J.C.Abbott (\cite{Abb}) as a  
way of 
considering semi-Boolean algebras with a single operation. Finite 
Boolean algebras are easily described and well understood,  but similar 
descriptions for implication algebras are lacking. In this work we 
show that finite implication algebras are cryptomorphic to hypergraphs.
Then we give a construction of the associated graphical 
polymatroid from the implication algebra viewpoint and deduce certain 
algebraic properties. Finally we show that these properties characterise all 
such polymatroids. A related characterisation can be found in 
\cite{Whi}.

\section{Finite Implication Algebras}
We know that the isomorphism type of a finite Boolean is completely determined 
by the number of atoms. Of course this cannot be true for finite 
implication algebras, but there are a fairly simple set of 
invariants  that do determine the isomorphism type. 

\begin{defn}\label{def:minRk}
    Let $\mathcal{I}$ be a finite implication algebra. Let 
    \begin{align*}
	M_I & =\Set{a | a\in\mathcal{I}\text{ is minimal}}  \\
	n(I) & =\card{M_I}\\
	\intertext{$r\colon \wp(M_I)\setminus\Set{\emptyset}\to 
	\N$ is defined by}
	p_{I}(S)=k\text{ iff }[\bigvee S,  1]\simeq 2^{k}.
    \end{align*}
    The function $p_{I}$ ic called 
    the \emph{implication profile} of $\mathcal{I}$.
\end{defn}

It is easy to see that $p_{I}(S)$ is the height of $[\bigvee S,  1]$.

\begin{thm}\label{thm:minRk}
    Let $\mathcal{I}_{1}$ and $\mathcal{I}_{2}$ be two finite 
    implication algebras. Then 
    \begin{align*}
    \mathcal{I}_{1}\simeq\mathcal{I}_{2} \text{ iff } & 
    n(I_{1})=n(I_{2})\text{ and }\\
    &\qquad \text{ there is a bijection }\varphi\colon M_{I_{1}}\to 
    M_{I_2}\text{ such that 
    }\\
    &\qquad p_{I_{1}}(X)=p_{I_{2}}(\varphi[X])\text{ for all 
    }X\subseteq M_{I_{1}}.
    \end{align*}
\end{thm}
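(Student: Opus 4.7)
The forward direction is immediate: an isomorphism $\Phi\colon\mathcal{I}_1\to\mathcal{I}_2$ sends minimal elements to minimal elements, so it restricts to a bijection $\varphi\colon M_{I_1}\to M_{I_2}$, and preservation of joins together with the Boolean structure of every $[a,1]$ gives the profile identity. For the converse, the plan is to construct $\Phi$ in two stages. Let $J_i=\{\bigvee S\mid\emptyset\ne S\subseteq M_{I_i}\}$ denote the elements of $\mathcal{I}_i$ that are joins of minimal elements; in general $J_i$ is a proper subset of $\mathcal{I}_i$ (e.g.\ if $\mathcal{I}$ has a single minimal element $m$ with $p_I(\{m\})=2$, then $J=\{m\}$ while $\mathcal{I}$ has four elements), so a genuine extension step is needed.

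\emph{Stage 1.} Define $\Phi(\bigvee S)=\bigvee\varphi[S]$ on $J_1$. Well-definedness reduces to showing that the relation $\bigvee X=\bigvee Y$ is controlled by $p_I$. Setting $\operatorname{cl}(X)=\{m\in M_I\mid m\le\bigvee X\}$, one has $\bigvee X=\bigvee\operatorname{cl}(X)$, so $\bigvee X=\bigvee Y$ iff $\operatorname{cl}(X)=\operatorname{cl}(Y)$; the key identity is
\[
m\in\operatorname{cl}(X)\iff p_I(X\cup\{m\})=p_I(X),
\]
which holds because $[\bigvee(X\cup\{m\}),1]$ is an upward-closed subset of the Boolean algebra $[\bigvee X,1]$, so equal ranks force equal cardinalities and hence equality as sets. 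The profile identity $p_{I_1}=p_{I_2}\circ\varphi$ then makes $\Phi$ a well-defined, order-preserving bijection $J_1\to J_2$.

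\emph{Stage 2.} Extend $\Phi$ to $\mathcal{I}_1\setminus J_1$. Process $b\in J_1$ in decreasing order. At stage $b$, the filter $[b,1]$ is a Boolean algebra whose rank is already matched with that of $[\Phi(b),1]$ (by the closure calculation above), and $\Phi$ has been defined on $\{b\}\cup\bigcup_{b'>b,\,b'\in J_1}[b',1]\subseteq[b,1]$; extend it to a Boolean algebra isomorphism $[b,1]\to[\Phi(b),1]$ agreeing with these prescribed values. Since every element of $\mathcal{I}_1$ lies in $[b_a,1]$ for $b_a=\bigvee\{m\in M_{I_1}\mid m\le a\}\in J_1$, once $b$ ranges over $J_1$ we have $\Phi$ defined on all of $\mathcal{I}_1$. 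The resulting $\Phi$ is an order isomorphism, and because the implication operation in a semi-Boolean algebra is recoverable from the order via relative complementation in each principal filter, $\Phi$ is an isomorphism of implication algebras.

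The main obstacle I expect is the extension step in Stage 2: one must verify that the partial data on the upper set $\{b\}\cup\bigcup_{b'>b,\,b'\in J_1}[b',1]$ admits extension to a Boolean algebra isomorphism into $[\Phi(b),1]$---equivalently, that the ``pinned'' atoms of $[b,1]$ (those of the form $\bigvee S$ covering $b$) and the ``free'' atoms correspond under the profile to the same split of the atoms of $[\Phi(b),1]$. This should follow from the profile identity controlling the rank of every intermediate filter, but a clean execution is likely best organised through the hypergraph cryptomorphism announced in the introduction.
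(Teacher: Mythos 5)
Your skeleton is in substance the paper's: work down from the top, pin each new Boolean interval by the isomorphisms already constructed on the filters above it, and extend freely on what is left. (The paper indexes the recursion by subsets $X\subseteq M_{I_1}$ rather than by the join-closed set $J_1$, building maps $f_X\colon[\bigvee X,\one]\to[\bigvee\varphi[X],\one]$ by backwards induction; your Stage 1 closure computation, showing that $p$ detects when $\bigvee X=\bigvee Y$, is correct but is a repackaging of bookkeeping the paper handles implicitly.) The problem is that the step you flag at the end as ``the main obstacle I expect'' is not a loose end --- it is the entire content of the inductive step, and it does not follow from merely observing that the profile identity controls the rank of every intermediate filter. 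Two things must actually be checked: first, that the prescribed partial data on $\bigcup_{b'>b}[b',\one]$ is a single coherent map (the union of independently chosen extensions need not be a function unless coherence is maintained --- this is why the paper carries $X\subseteq Y\Rightarrow f_Y\subseteq f_X$ as part of the induction hypothesis, so that $f_{X\cup\{a\}}$ and $f_{X\cup\{a'\}}$ agree on $[\bigvee X\join a\join a',\one]$, both restricting to $f_{X\cup\{a,a'\}}$); second, that the coherent glued map extends to an isomorphism of the whole interval, which needs an explicit complement, not just matching ranks.

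The paper supplies that complement as follows. With $c=\bigwedge_{a\in M_{I_1}\setminus X}a$, one has $\bigwedge_{a}(\bigvee X\join a)=\bigvee X\join c$, so the glued map $f'$ is automatically an isomorphism of $[\bigvee X\join c,\one]$ onto the corresponding filter on the other side; then $[\bigvee X,\one]$ factors as $[\bigvee X\join c,\one]\times[c\to\bigvee X,\one]$, the profile identity $p_{I_1}(X)=p_{I_2}(\varphi[X])$ together with the already-matched height of the first factor forces the second factors to have equal height, and an arbitrary isomorphism $f''$ of the second factors completes $f_X=f'\times f''$. This is exactly the ``pinned versus free atoms'' split you ask for, and it is available directly. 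One further caution: you suggest organising the execution through the hypergraph cryptomorphism, but in the paper that cryptomorphism (Theorem~\ref{thm:hypIso}) is itself deduced from Theorem~\ref{thm:minRk}, so routing the argument that way would be circular as the development stands.
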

\begin{proof}
    The left to right direction is clear. 
    
    Suppose that we have 
    $n= n({I}_{1})= n({I}_{2})$ and 
    a bijection $\varphi\colon M_{I_1}\to 
	M_{I_2}$ such that  $p_{I_1}(X)=p_{I_2}(\varphi[X])$ for 
	all $X\subseteq M_{I_1}$.   
	
   We define by backwards induction a family of isomorphisms
   $f_{X}\colon [\bigvee X, \one]\to [\bigvee \varphi[X], \one]$ such 
   that $X\subseteq Y$ implies $f_{Y}\subseteq f_{X}$. The desired 
   mapping is then $\bigcup_{a\in M_{I_1}}f_{\Set a}$. 
   
   The base case is $X=M_{I_1}$ and we know that 
   $\varphi[M_{I_1}]=M_{I_2}$ and
   $p_{I_1}(\bigvee M_{I_1})= p_{I_2}(\bigvee M_{I_2})$ 
   so that 
   $[\bigvee M_{I_1}, \one]\simeq [\bigvee M_{I_2}, \one]$. 
   Let $f_{\mathcal{I}_{1}}$ be any Boolean isomorphism between 
   these two Boolean algebras. 
   
   Now suppose that $\emptyset\not= X\subseteq M_{I_1}$ 
   and $\card X=k$. Then we know that for all $a\in 
   M_{I_1}\setminus X$ we have a mapping $f_{X\cup\Set a}$ 
   and that $a\not=b$ implies 
   $f_{X\cup\Set a}\restrict[\bigvee X\join a\join b]= 
   f_{X\cup\Set b}\restrict[\bigvee X\join a\join b]= f_{X\cup\Set{a, b}}$. 
   
   Thus we may glue these mappings together to obtain a mapping $f'$ from 
   $[\bigwedge_{a\in M_{I_1}\setminus X}\left(\bigvee X\join 
   a\right), \one]$ to\\
   $[\bigwedge_{a\in M_{I_2}\setminus \varphi[X]}\left(\bigvee \varphi[X]\join 
   \varphi(a)\right), \one]$. $f'$ is an isomorphism as each of its 
   components are isomorphisms. 
   
   Note that $\bigwedge_{a\in M_{I_1}\setminus X}\left(\bigvee X\join 
      a\right)=\bigvee X\join(\bigwedge_{a\in M_{I_1}\setminus X}a)$
   
   Now we have $p_{I_1}(X)= p_{I_2}(\varphi[X])$ so that 
   $[\bigvee X, \one]\simeq [\bigvee\varphi[X], \one]$ and hence \\
   $[(\bigwedge_{a\in M_{I_1}\setminus X}a)\to \bigvee X, \one]\simeq 
   [(\bigwedge_{a\in M_{I_2}\setminus \varphi[X]}a)\to\bigvee\varphi[X], \one]$. 
   Let $f''$ be any isomorphism between these latter two intervals. If 
   $f_{X}$ is the result of the 
   natural gluing of $f'$ and $f''$ then everything works. 
\end{proof}

The number of coatoms of $\mathcal I$ is also important as it 
determines the \emph{enveloping algebra} of $\mathcal I$ -- see 
below, \lemref{lem:envCoat}. The enveloping algebra of an implication 
algebra is the minimal Boolean algebra in which $I$ embeds as an 
upwards-closed sub-implication algebra. These exist for all 
implication algebras. 

\section{Hypergraphs as Implication algebras}

\begin{defn}\label{def:hypergraph}
    A \emph{hypergraph} is a pair $\rsf{H}=\brk<V, E>$ where $H$ is a 
    finite set of \emph{vertices} and $E\subseteq H$ is a set of \emph{edges}. 
\end{defn}

We assume that hypergraphs do not have isolated vertices --i.e. every 
vertex is in some edge. 

We may define from a hypergraph a finite implication algebra $\mathcal{I}_{\rsf{H}}$ 
as $\Set{X | X\subseteq e\text{ for some }e\in E}$ ordered by reverse inclusion. 

Conversely, given a finite implication algebra $\mathcal{I}$ we first 
compute the enveloping algebra $B$ and let 
$H$ be the set of coatoms of $B$ and $e\in E$ iff there is a minimal 
element $e'$ of $\mathcal{I}$ such that $e=\Set{h\in H | e'\le h}$. 
Then $\rsf{H}_{\mathcal{I}}=\brk<H, E>$. 

We would like to observe the relationship between these two 
constructions. 

\begin{lem}\label{lem:envCoat}
    Let $\mathcal{I}$ be a finite implication algebra and 
    $B=\env(\mathcal{I})$ be its enveloping Boolean algebra. Then 
    the coatoms of $B$ are exactly the coatoms of $\mathcal{I}$. 
\end{lem}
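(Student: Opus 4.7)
The plan is to prove the two inclusions separately. The forward direction, that every coatom of $\mathcal{I}$ is a coatom of $B$, is immediate from upward closure: if $c$ is a coatom of $\mathcal{I}$, then the entire interval $[c,\one]_{B}$ lies inside $\mathcal{I}$, so it collapses to $\{c,\one\}$, and $c$ must be a coatom of $B$ as well.

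For the reverse, suppose $c$ is a coatom of $B$ with Boolean complement $a$, so $a$ is an atom with $a\meet c=\zero$ and $a\join c=\one$. I would consider the canonical Boolean quotient $\pi_{a}\colon B\to[a,\one]_{B}$ given by $\pi_{a}(x)=x\join a$. The key claim is that $\pi_{a}$ cannot be injective on $\mathcal{I}$. For if it were, then since $\pi_{a}$ is a Boolean homomorphism (and hence preserves $\to$), the map $\pi_{a}\restrict\mathcal{I}$ would be an implication-algebra embedding. Moreover its image $\pi_{a}[\mathcal{I}]$ would be upward-closed in $[a,\one]_{B}$, since any $y\in[a,\one]_{B}$ with $y\geq x\join a$ for some $x\in\mathcal{I}$ satisfies $y\geq x$, forcing $y\in\mathcal{I}$ by upward closure in $B$ and hence $y=y\join a\in\pi_{a}[\mathcal{I}]$. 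This would realise $\mathcal{I}$ as an upward-closed sub-implication algebra of $[a,\one]_{B}$, a strictly smaller Boolean algebra than $B$, contradicting the minimality of $B=\env(\mathcal{I})$.

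Hence there exist $x\neq y$ in $\mathcal{I}$ with $x\join a=y\join a$. Meeting with $c$ gives $x\meet c=y\meet c$, and since $a$ is an atom, one of $x,y$, say $y$, must satisfy $y\meet a=\zero$, so that $y=y\meet c\le c$. Upward closure of $\mathcal{I}$ in $B$ then forces $c\in\mathcal{I}$, and the forward direction above tells us that this $c$ is a coatom of $\mathcal{I}$.

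The principal obstacle is justifying the minimality step cleanly: one has to verify, under the hypothesis of injectivity, that $\pi_{a}[\mathcal{I}]$ really does sit inside $[a,\one]_{B}$ as an upward-closed sub-implication algebra, so that the minimality clause in the definition of $\env(\mathcal{I})$ is genuinely violated. Once this is pinned down, the remaining atom-complement arithmetic inside the Boolean algebra $B$ is routine.
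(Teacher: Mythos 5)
Your proof is correct, but it reaches the reverse inclusion by a genuinely different route from the paper's. The forward inclusion (coatoms of $\mathcal{I}$ are coatoms of $B$) is the same in both. For the converse, the paper invokes minimality of the envelope only through the single structural fact $\bigwedge_{i=1}^{k}a_{i}=\zero$, where $a_{1},\dots,a_{k}$ are the minimal elements of $\mathcal{I}$; it then distributes to get $c=c\join\bigwedge_{i}a_{i}=\bigwedge_{i}(c\join a_{i})$, notes each factor is $c$ or $\one$ because $c$ is a coatom, and concludes that some $a_{i}\le c$, whence $c\in\mathcal{I}$ by upward closure. You instead use minimality in the form closest to the definition as literally stated: if $\pi_{a}(x)=x\join a$ (with $a=\comp{c}$ an atom) were injective on $\mathcal{I}$, then $\mathcal{I}$ would embed as an upward-closed sub-implication algebra of the strictly smaller Boolean algebra $[a,\one]_{B}$, contradicting minimality; the resulting collision $x\join a=y\join a$ with $x\neq y$ then forces $x\meet a\neq y\meet a$, so one of the two elements meets $a$ in $\zero$ and hence lies below $c$, and upward closure finishes. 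Both arguments are sound. The paper's is shorter once one grants $\bigwedge_{i}a_{i}=\zero$, though that fact is itself justified by essentially the same minimality argument you make explicit (if the meet were $d>\zero$, then $\mathcal{I}$ would sit upward-closed inside $[d,\one]$). Your version carries the extra burden of verifying that $\pi_{a}\restrict\mathcal{I}$ is an implication-algebra embedding with upward-closed image -- which you do correctly -- but in exchange it requires no prior knowledge about the minimal elements of $\mathcal{I}$.
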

\begin{proof}
    As $\mathcal{I}$ is upwards-closed in $B$ we know that any 
    $\mathcal{I}$-coatom is also a $B$-coatom. 
    
    Let $\Set{a_{1}, \dots, a_{k}}$ be all the minimal elements of 
    $\mathcal{I}$. Then we have $\bigwedge_{i=1}^{k}a_{i}=\zero$ (by 
    minimality of the envelope). 
    Let $c$ be any $B$-coatom. Then 
    \begin{align*}
	c & =c\join \zero  \\
	 & = c\join\bigwedge_{i=1}^{k}a_{i}  \\
	 & =\bigwedge_{i=1}^{k}(c\join a_{i}). 
    \end{align*}
    Now $c\join a_{I}$ is either $\one$ or $c$. If every $c\join 
    a_{i}=\one$ then $c=\one$ -- contradiction. Hence $c\join a_{i}=c$ 
    for some $I$ and so $a_{i}\le c$. Thus $c\in\mathcal{I}$ and so 
    must be an $\mathcal{I}$-coatom. 
\end{proof}

\begin{thm}\label{thm:hypIso}
    Let Let $\mathcal{I}_{1}$ and $\mathcal{I}_{2}$ be two finite 
    implication algebras. Then 
    \begin{enumerate}[(a)]
	\item  $\mathcal{I}_{1}\simeq\mathcal{I}_{2}$  iff 
	$\rsf{H}_{1}\simeq \rsf{H}_{2}$;  
    
	\item  For any hypergraph $\rsf{H}$ 
	$\rsf{H}_{\mathcal{I}_{\rsf{H}}}\simeq \rsf{H}$; 
	
	\item For any implication algebra $\mathcal{I}$ 
	$\mathcal{I}_{\rsf{H}_{\mathcal{I}}}\simeq \mathcal{I}$. 
    \end{enumerate}
\end{thm}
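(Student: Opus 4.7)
The plan is to prove (b) and (c) by making the Stone-like correspondence between an implication algebra and the coatoms of its envelope explicit, and then to deduce (a) by transporting isomorphisms through these two constructions.

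For (b), let $\rsf{H}=\brk<V, E>$. I would first observe that in $\mathcal{I}_{\rsf{H}}$, ordered by reverse inclusion, the coatoms are exactly the singletons $\Set{v}$ for $v\in V$ (using that no vertex is isolated), while the minimal elements are the $\subseteq$-maximal edges. The envelope of $\mathcal{I}_{\rsf{H}}$ is (a copy of) $\wp(V)$ under reverse inclusion, and by \lemref{lem:envCoat} its coatoms coincide with those of $\mathcal{I}_{\rsf{H}}$. Under the identification $\Set{v}\leftrightarrow v$, the edge of $\rsf{H}_{\mathcal{I}_{\rsf{H}}}$ produced by a minimal element $e$ is $\{\{v\}:v\in e\}$, which matches $e$ itself, yielding $\rsf{H}_{\mathcal{I}_{\rsf{H}}}\simeq\rsf{H}$.

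For (c), set $B=\env(\mathcal{I})$, let $H$ be its set of coatoms (which by \lemref{lem:envCoat} are the $\mathcal{I}$-coatoms), and form $\rsf{H}_{\mathcal{I}}$, whose edges are the sets $\Set{h\in H | e'\le h}$ for each minimal $e'\in\mathcal{I}$. I would propose the Stone-like map
\[
\Phi\colon \mathcal{I}\to \mathcal{I}_{\rsf{H}_{\mathcal{I}}},\qquad \Phi(x)=\Set{h\in H | x\le h}.
\]
Well-definedness: every $x\in\mathcal{I}$ lies above some minimal $e'$, so $\Phi(x)\subseteq \Phi(e')$, which is an edge of $\rsf{H}_{\mathcal{I}}$. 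Order-reversal and injectivity are routine from standard Stone duality inside the finite Boolean algebra $B$. The delicate point is surjectivity: given $X\subseteq \Phi(e')$, I would take $y=\bigwedge X$ computed in $B$, and argue that since $[e', \one]$ is a Boolean interval whose atoms are exactly the coatoms of $B$ above $e'$, i.e.\ the elements of $\Phi(e')$, we have $y\in[e', \one]\subseteq\mathcal{I}$ and $\Phi(y)=X$.

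For (a), the forward direction follows because any implication-algebra isomorphism extends to an isomorphism of envelopes, which carries coatoms to coatoms and minimal elements to minimal elements, producing a hypergraph isomorphism. For the converse, a hypergraph isomorphism $\rsf{H}_1\simeq\rsf{H}_2$ induces a bijection of subsets of edges and hence $\mathcal{I}_{\rsf{H}_1}\simeq\mathcal{I}_{\rsf{H}_2}$; combining with (c) yields $\mathcal{I}_1\simeq\mathcal{I}_{\rsf{H}_1}\simeq\mathcal{I}_{\rsf{H}_2}\simeq\mathcal{I}_2$. The main obstacle I anticipate is the surjectivity step in (c): one must identify each edge $\Phi(e')$ with the atom set of the Boolean interval $[e', \one]$, and use that this interval lies entirely inside $\mathcal{I}$ so that every subset of $\Phi(e')$ can be realized as $\Phi(y)$ for some $y\in\mathcal{I}$.
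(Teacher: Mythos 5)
Your argument is correct, but it runs in the opposite logical direction from the paper's. The paper proves (a) directly by reducing it to \thmref{thm:minRk}: if $X$ is the family of edges coming from a set $X'$ of minimal elements, then $\card{\bigcap X}=\cork(\bigvee X')=p(X')$, so the hypergraph encodes exactly the implication profile and the classification theorem does the work; (c) is then deduced from (a) and (b). You instead prove (c) directly via the explicit Stone-type map $\Phi(x)=\Set{h\in H | x\le h}$ and recover (a) as a corollary of (c) together with the (easy) transport of isomorphisms through both constructions. Your route is self-contained -- it bypasses the gluing induction behind \thmref{thm:minRk} entirely -- and it produces the isomorphism in (c) concretely rather than as an abstract consequence; the surjectivity argument is sound, since $e'=\bigwedge\Phi(e')$ in the finite envelope, so $[e',\one]\subseteq\mathcal I$ and every subset of $\Phi(e')$ is realized as $\Phi(\bigwedge X)$. (One terminological slip: the elements of $\Phi(e')$ are the \emph{coatoms} of the interval $[e',\one]$, not its atoms; the argument is unaffected.) What the paper's route buys instead is the link between the hypergraph and the implication profile, which is what the subsequent polymatroid discussion actually uses. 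Note also that both proofs of (b) share the same unstated hypothesis: since $\rsf{H}_{\mathcal I}$ only produces edges from \emph{minimal} elements, the round trip recovers only the $\subseteq$-maximal edges of $\rsf H$, so (b) as literally stated requires the edge set to be an antichain; this is a defect of the definitions rather than of your argument.
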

\begin{proof}
    \begin{enumerate}[(a)]
	\item[]
	\item  It suffices to note that if $\rsf{H}_{I}$ that 
	if $X$ is a family of edges coming from the minimal elements of
	$X'$ then $\card{\bigcap X}= \cork(\bigvee X')$ and so the result 
	follows from \thmref{thm:minRk}. 
    
	\item  In $\mathcal{I}_{\rsf{H}}$ we see that coatoms 
	correspond precisely to vertices of $\rsf{H}$ and the minimal 
	elemenst with edges -- so reconstructing $\rsf{H}$ works -- 
	just define $f\colon V\to \text{CoAt}(\mathcal{I})$ in the 
	natural way and observe that $f$ preserves edges. 
    
	\item  This follows from parts (a) and (b) -- as (b) gives 
	$\rsf{H}_{\mathcal{I}_{\rsf{H}_{\mathcal{I}}}}\simeq 
	\rsf{H}_{\mathcal{I}}$,  and so (by (a)) we have 
	$\mathcal{I}_{\rsf{H}_{\mathcal{I}}}\simeq \mathcal{I}$.	
    \end{enumerate}
\end{proof}

We note the connection between implication profiles and the 
graphical polymatroid of a hypergraph. The latter is defined as 
$\rho(S)=\card{\bigcup_{e\in S}}e$ where $S$ is any subset of $E$. 

In $\mathcal I_{\rsf H}$ we have 
$p(S)=\cork(\bigvee S)= \card{\bigcap S}$. Therefore $p$ and $\rho$ 
are related by inclusion-exclusion:
$$
\rho(S)=\sum_{\emptyset\not=T\subseteq S}(-1)^{\card T+1}p(T).
$$

\section{Implication Profiles} 

From \thmref{thm:minRk} we know that an implication profile is 
characterizes the algebra it came from. In this section we would like 
to consider which functions can be profiles. To this end we first consider 
a slight generalization and some properties of these functions. 

\begin{defn}\label{def:iprofileII}
    Let $\mathcal{I}$ be a finite implication algebra contained in a Boolean algebra $B$ as an upper segment.
    Let $b\in B$. The \emph{profile of $\mathcal{I}$ at $b$} is the function
    $$
    p_{b, I}\colon \wp(M_{I})\setminus\Set{\emptyset}\to\N
    $$
    defined by 
    $$
    p_{b, I}(S)=k\text{ iff }[\bigvee S\join b,  1]\simeq 2^{k}.
    $$
\end{defn}

\begin{lem}\label{lem:submod}
    Let $\mathcal{I}$ be a finite implication algebra contained in a Boolean algebra $B$ as an upper segment.
    Let $b\in B$ and $p$ be the profile of $\mathcal{I}$ at $b$. Then
    \begin{enumerate}[(a)]
	\item  $p$ is decreasing; 
    
	\item  $p$ is submodular,  ie for any non-disjoint $S_{1}$ and $S_{2}$ contained in $M_{I}$ we have
	$p(S_{1})+p(S_{2})\le p(S_{1}\cup S_{2})+p(S_{1}\cap S_{2})$.
    \end{enumerate}
\end{lem}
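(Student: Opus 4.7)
The plan is to translate $p_{b,I}$ into a statement about the rank function of the ambient Boolean algebra $B$ and then invoke the two classical properties of Boolean rank: monotonicity and modularity. Say $B \simeq 2^n$ with rank function $r$, so that $[x,\one] \simeq 2^{n-r(x)}$ for every $x\in B$. Then by definition
\[
p_{b,I}(S) = n - r\bigl(\bigvee S \vee b\bigr),
\]
and it is convenient to abbreviate $x_S := \bigvee S \vee b$ for $\emptyset\neq S \subseteq M_I$.

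For (a), suppose $S_1 \subseteq S_2$. Then $x_{S_1} \le x_{S_2}$, so by monotonicity of $r$ we have $r(x_{S_1}) \le r(x_{S_2})$, and hence $p_{b,I}(S_1) \ge p_{b,I}(S_2)$. This is the \emph{decreasing} claim once one notes that the natural order on the argument of $p$ is inclusion.

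For (b), cancelling the constant $n$-terms reduces the desired submodular inequality to
\[
r(x_{S_1 \cup S_2}) + r(x_{S_1 \cap S_2}) \le r(x_{S_1}) + r(x_{S_2}).
\]
The key observation is that $x_{S_1 \cup S_2} = x_{S_1} \vee x_{S_2}$ is an identity, while $x_{S_1 \cap S_2} \le x_{S_1} \wedge x_{S_2}$ is only an inequality (in general the meet on the right can strictly exceed the left, because $b$ contributes to both $x_{S_1}$ and $x_{S_2}$). I would then combine the modularity identity $r(x_{S_1}\vee x_{S_2}) + r(x_{S_1}\wedge x_{S_2}) = r(x_{S_1}) + r(x_{S_2})$ in the Boolean algebra $B$ with monotonicity of $r$ applied to replace $r(x_{S_1}\wedge x_{S_2})$ by the no-larger $r(x_{S_1 \cap S_2})$; this gives the inequality at once.

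The hypothesis $S_1 \cap S_2 \neq \emptyset$ is used only to ensure $p(S_1 \cap S_2)$ is a defined quantity. After that the argument is a routine rank computation in $B$, and I do not anticipate a genuine obstacle. The one point worth flagging is the direction of the inequality $x_{S_1\cap S_2} \le x_{S_1} \wedge x_{S_2}$: it is essential that this goes the right way so that monotonicity of $r$ preserves the chain of comparisons leading to submodularity rather than reversing it.
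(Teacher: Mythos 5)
Your proof is correct and follows essentially the same route as the paper: the paper argues with heights of intervals $[\bigvee S \join b, \one]$ and the modularity identity $\text{ht}[x,\one]+\text{ht}[y,\one]=\text{ht}[x\join y,\one]+\text{ht}[x\meet y,\one]$, which is just your rank computation $p_{b,I}(S)=n-r(\bigvee S\join b)$ read upside down. The key inequality $\bigvee(S_1\cap S_2)\join b\le (\bigvee S_1\join b)\meet(\bigvee S_2\join b)$ versus the equality for unions is exactly the observation the paper makes, so there is nothing to add.
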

\begin{proof}
    \begin{enumerate}
	\item  This is immediate,  as $S_{1}\subseteq S_{2}$ implies $\bigvee S_{1}\le \bigvee S_{2}$
	and so $[\bigvee S_{2}\join b, 1]$ is an upper segment of $[\bigvee S_{1}\join b, 1]$.
    
	\item  First notice that $\bigvee(S_{1}\cap S_{2})\le \bigvee S_{1}\meet \bigvee S_{2}$ and 
	$\bigvee(S_{1}\cup S_{2})=\bigvee S_{1}\meet \bigvee S_{2}$.
	Then we get 
	\begin{align*}
	    p(S_{1})+p(S_{2}) & = \text{ht}[\bigvee S_{1}, 1]+ \text{ht}[\bigvee S_{2}, 1] \\
	     & = \text{ht}[\bigvee S_{1}\join\bigvee S_{2}, 1]+\text{ht}[\bigvee S_{1}\meet \bigvee S_{2}, 1]  \\
	     & \le \text{ht}[\bigvee (S_{1}\cup S_{2}), 1]+\text{ht}[\bigvee (S_{1}\cap S_{2}), 1]  \\
	     & =p(S_{1}\cup S_{2})+p(S_{1}\cap S_{2}).
	\end{align*}
    \end{enumerate}
\end{proof}

\begin{thm}\label{thm:profile}
    Let $\mathcal{I}$ be a finite implication algebra
    and $p$ be the profile of $\mathcal{I}$. Then
    \begin{enumerate}[(a)]
	\item  $p$ is decreasing; 
    
	\item  $p$ is submodular; 
    
	\item  for any $\emptyset\not=A\subseteq M_{I}$ the functions
	$p_{A}$ and $q_{A}$ from $\wp(M_{I}\setminus A)\setminus\Set{\emptyset}\to\N$ defined by
	\begin{align*}
	    p_{A}(X) & = p(X\cup A)  \\
	    q_{A}(X) & = p(X)-p(X\cup A)
	\end{align*}
	are both decreasing and submodular.
    \end{enumerate}
\end{thm}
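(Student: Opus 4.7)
The plan is to deduce all three parts from \lemref{lem:submod} by picking its parameter $b$ appropriately, and in the $q_A$ case by passing to a sub-Boolean-algebra of $B=\env(\mathcal{I})$. Parts (a) and (b) are immediate: the profile $p$ coincides with the profile of $\mathcal{I}$ at $b=\zero$, since $\bigvee S\join\zero=\bigvee S$, so \lemref{lem:submod} applies verbatim.

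For the $p_A$ half of (c), observe that
$$
p_A(X)=p(X\cup A)=\text{ht}[\bigvee X\join\bigvee A,\one],
$$
which is the value at $X$ of the profile of $\mathcal{I}$ at $b=\bigvee A$. Since $X$ ranges over nonempty subsets of $M_I\setminus A$, \lemref{lem:submod} again delivers decreasingness and submodularity at once.

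The interesting case is $q_A$. Work in $B$ and set $c=\bigvee A$. The diamond (modular) isomorphism $[a,\,a\join c]\simeq[a\meet c,\,c]$, applied with $a=\bigvee X$, gives
$$
q_A(X)=p(X)-p(X\cup A)=\text{ht}[\bigvee X,\,\bigvee X\join c]=\text{ht}[\bigvee X\meet c,\,c].
$$
By distributivity in $B$, $\bigvee X\meet c=\bigvee_{x\in X}(x\meet c)$, so $q_A(X)$ is the coheight, measured toward the top $c$ of the Boolean algebra $[\zero,c]$, of the join of the family $\{x\meet c\mid x\in X\}$. One now reruns the argument of \lemref{lem:submod} inside $[\zero,c]$: monotonicity of $\bigvee$ gives that $q_A$ is decreasing, while the modular law in $[\zero,c]$, combined with $\bigvee_{x\in X_1\cup X_2}(x\meet c)=\bigvee_{x\in X_1}(x\meet c)\join\bigvee_{x\in X_2}(x\meet c)$ and $\bigvee_{x\in X_1\cap X_2}(x\meet c)\le\bigvee_{x\in X_1}(x\meet c)\meet\bigvee_{x\in X_2}(x\meet c)$, yields submodularity of $q_A$.

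The only real obstacle is recognising this reformulation of $q_A$ as a coheight-of-join function on the sub-Boolean-algebra $[\zero,c]$; once that is in hand no new ideas are needed and the proof of \lemref{lem:submod} is simply repeated in the smaller Boolean algebra.
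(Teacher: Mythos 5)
Your proof is correct and follows the paper's strategy: parts (a), (b) and the $p_A$ half of (c) are obtained exactly as in the paper, by applying \lemref{lem:submod} with $b=\zero$ and $b=\bigvee A$ respectively. For $q_A$ the paper is slightly more economical: writing $a=\bigvee A$, it uses $\text{ht}[\bigvee X,\one]=\text{ht}[\bigvee X\join a,\one]+\text{ht}[\bigvee X\join\comp{a},\one]$ to identify $q_A$ with the profile $p_{\comp{a},I}$, so that \lemref{lem:submod} applies verbatim; your interval $[\bigvee X\meet c,\,c]$ is isomorphic (via $v\mapsto v\join\comp{c}$) to $[\bigvee X\join\comp{c},\,\one]$, so your re-run of the lemma's argument inside $[\zero,c]$ is the same computation viewed from below rather than a genuinely different route.
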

\begin{proof}
    As every implication algebra embeds as an upper segment of some Boolean algebra we can deduce 
    these results from the lemma above.
    
    The first two follow as $p=p_{0,  I}$ and $p_{A}=p_{\bigvee A,  I}$ gives half of the next one.
    Finally, if we let $a=\bigvee A$ then we have
    $\bigvee X=(\bigvee X\join a)\meet(\bigvee X\join\comp a)$ implies
    $\text{ht}[\bigvee X, 1]=\text{ht}[\bigvee X\join a, 1]+\text{ht}[\bigvee X\join\comp a, 1]$ so that
    $q_{A}(X)=p_{\comp a,  I}$. 
\end{proof}

We note that of $p$ is any decreasing submodular function then $p_{A}$ is always submodular and decreasing.
However that is not so for $q_{A}$. That can only happen for profiles of implication algebras.

\begin{thm}\label{thm:profileII}
    Let $M$ be a finite set,  $p\colon\wp(M)\setminus\Set{\emptyset}\to \N$ be a function
    having the properties listed in the conclusion to the last theorem. Then 
    there is a finite implication algebra $\mathcal{I}$ with
    \begin{enumerate}[(a)]
	\item  $\card{M_{I}}=\card M$ (so we will assume that $M=M_{I}$); and
    
	\item  $p$ is the implication profile of $\mathcal{I}$.
    \end{enumerate}
\end{thm}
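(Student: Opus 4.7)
The plan is to construct a hypergraph $\rsf{H}$ realising $p$ as its profile; by \thmref{thm:hypIso} the associated $\mathcal{I}_{\rsf{H}}$ will then be the desired implication algebra. For a hypergraph $\rsf{H} = (V, \{e_m\}_{m\in M})$ the profile $p_{\rsf{H}}(S) = \card{\bigcap_{m\in S} e_m}$ is determined by the vertex-type counts $f(T) = \card{\{v \in V : \{m \in M : v \in e_m\} = T\}}$ for nonempty $T \subseteq M$, via $p(S) = \sum_{T \supseteq S} f(T)$. By M\"obius inversion, $f(T) = \sum_{C \subseteq M\setminus T}(-1)^{\card{C}} p(T\cup C)$. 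Hence, once we show each $f(T) \geq 0$, the obvious realisation with $V = \bigsqcup_T V_T$, $\card{V_T} = f(T)$ and $e_m = \bigsqcup_{T \ni m} V_T$ produces a hypergraph with profile $p$, and $\mathcal{I}_{\rsf{H}}$ is the sought algebra.

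Denote by $\Delta_A p(X) = \sum_{C \subseteq A}(-1)^{\card{C}}p(X\cup C)$ the iterated difference. The substance of the proof is to show $\Delta_A p(X) \geq 0$ for every pair of disjoint nonempty $A, X$ with $A \cup X \subseteq M$; this is equivalent to $f(T) \geq 0$ for all nonempty $T$. I argue by induction on $\card M$. For $\card A = 1$ the statement is just the decreasingness of $p$; for $\card A = 2$ it is the submodularity of $p$ applied to $X \cup \{a\}$ and $X \cup \{b\}$, which meet in the nonempty set $X$. For $\card A \geq 3$ I pick any $a \in A$ and pair the terms into blocks $\{C, C \cup \{a\}\}$, yielding the identity
\[
\Delta_A p(X) \;=\; \Delta_{A \setminus \{a\}}\, q_{\{a\}}(X),
\]
which reduces the problem to the analogous nonnegativity for $q_{\{a\}}$ on the smaller ground set $M\setminus\{a\}$; the inductive hypothesis then completes the argument.

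To close the induction one must verify that $q_{\{a\}}$ inherits all the hypotheses of \thmref{thm:profile} on $M\setminus\{a\}$. Decreasingness and submodularity of $q_{\{a\}}$ are given outright, and the shifts $(q_{\{a\}})_{A'}$ inherit both as restrictions to a sublattice. The genuine obstacle is to show that the derived function
\[
q^{q_{\{a\}}}_{A'}(X) \;=\; q_{\{a\}}(X) - q_{\{a\}}(X\cup A') \;=\; q_{A'}(X) - (q_{A'})_{\{a\}}(X)
\]
is itself decreasing and submodular for every nonempty $A' \subseteq M\setminus\{a\}$. Decreasingness follows at once from the submodularity of $q_{A'}$; submodularity of $q^{q_{\{a\}}}_{A'}$, however, is a genuine four-term inequality on $p$ at the levels $\emptyset, \{a\}, A'$ and $A'\cup\{a\}$, to be extracted by combining the submodularity of $q_{\{a\}}$, $q_{A'}$, and $q_{A'\cup\{a\}}$. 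This is precisely why property (c) of \thmref{thm:profile} is imposed for \emph{all} nonempty $B\subseteq M$ rather than only for singletons. Once this interlocking submodular inequality is in place, the induction closes and the hypergraph construction above delivers $\mathcal{I}$.
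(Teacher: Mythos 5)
Your reduction of the theorem to the nonnegativity of the iterated differences $\Delta_A p(X)$ is correct (modulo a remark below), as is the identity $\Delta_A p(X)=\Delta_{A\setminus\{a\}}q_{\{a\}}(X)$; and for $\card{A}\le 3$ the required inequalities really are instances of decreasingness of $p$, submodularity of $p$, and submodularity of $q_{\{a\}}$. The gap is exactly the step you flag as ``the genuine obstacle'': you assert that submodularity of $q^{q_{\{a\}}}_{A'}$ can be ``extracted by combining the submodularity of $q_{\{a\}}$, $q_{A'}$ and $q_{A'\cup\{a\}}$'', but you give no derivation, and none exists --- the implication is false. Take $M=\{0,1,2,3,4\}$ and let $p(\{0\})=3$, $p(\{i\})=p(\{0,i\})=1$ for $i=1,\dots,4$, and $p(S)=0$ for every other nonempty $S$; in your notation this is the function with $f(\{0\})=-1$, $f(\{0,i\})=1$ and $f(T)=0$ otherwise. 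Every inequality among the hypotheses has the shape $\sum_{T\supseteq X,\; B_1,B_2,B_3\not\subseteq T} f(T)\ge 0$ for a nonempty base $X$ and at most three pairwise disjoint blocks $B_i$ disjoint from $X$ (one block for decreasingness, two for submodularity and for $q_A$ decreasing, three for $q_A$ submodular). The unique negative coefficient $f(\{0\})$ enters only when $X=\{0\}$, and then at most three of the four sets $\{0,i\}$ can be killed by three blocks, so every such sum is $\ge -1+1=0$: all hypotheses hold. Yet $\Delta_{\{1,2,3,4\}}p(\{0\})=3-4=f(\{0\})=-1<0$; equivalently $q^{q_{\{1\}}}_{\{2\}}$ fails submodularity at $\{0,3\},\{0,4\}$, and $p$ is the profile of no implication algebra. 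So your induction cannot close, and in fact the theorem as stated is false once $\card{M}\ge 5$; the hypotheses must be strengthened, e.g.\ to the nonnegativity of all iterated differences, which is precisely the condition your M\"obius computation isolates as the correct characterization (together with integrality).

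Two further remarks. First, the paper's own proof founders on the same rock: it recurses after asserting that $q_{\{b\}}$ ``has the same properties that $p$ has'', which is exactly the unproved (and, by the example above, false) closure property your induction needs; your hypergraph formulation at least makes the true obstruction visible. Second, even granting $f(T)\ge 0$, conclusion (a) ($\card{M_I}=\card{M}$) requires the constructed edges $e_m$ to be distinct and pairwise incomparable, i.e.\ $p(\{m,m'\})<p(\{m\})$ for all $m\ne m'$; this strict inequality also does not follow from the stated hypotheses (the constant function satisfies them all), so it too must be added, both to your argument and to the theorem.
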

\begin{proof}
    We work by induction in some sufficiently large finite Boolean algebra -- like 
    $2^{m}$ where $m=\sum_{i\in M}p(\Set{i})$. 
    
    If $\card M=1$ we just take this Boolean algebra, matching the one element of $M$ with $\zero$.
    
    Otherwise take one element $b$ of $M$ and find $b\in B$ with $\text{ht}[b, 1]=p(\Set{b})$.
    Now we know that $p_{\Set{b}}$ and $q_{\Set b}$ are both decreasing and submodular -- in fact it is easy to see that
    they have the same properties that $p$ has. Now apply induction,  using 
    $p_{\Set b}$ in $[b, \one]$ and $q_{\Set b}$ in $[\comp b, \one]$. This gives elements
    $m_{b}\geq b$ and $m_{\comp b}$ for $m\in M\setminus\Set b$ with 
    the correct profile on each piece.
    Defining $m$ as $m_{b}\join m_{\comp b}$ for $m\in M$ gives the desired implication algebra.
\end{proof}

% \begin{defn}\label{def:prof}
%     A \emph{profile} is a function of the form described in the last theorem.
% \end{defn}
%
% As we saw above,  profiles correspond exactly to implication algebras.
% To see more we need to consider two types of implication morphisms.
% 
% \begin{defn}\label{def:im}
%     Let $\mathcal{I}$ and $\mathbbm J$ two two implication algebras and $f\colon I\to J$ be any function.
%     \begin{enumerate}[(a)]
% 	\item  $f$ is a \emph{(strong) implication morphism} iff $f$ preserves $\to$.
%     
% 	\item  $f$ is a \emph{weak implication morphism} iff $f\restrict[a, \one]$ is a Boolean morphism 
% 	for all $a\in I$.
%     \end{enumerate}
% \end{defn}
% 
% 
% It is easy to see that every strong morphism is also weak and that both classes of morphisms are closed under composition. The converse is false.
% The reason for considering weak morphisms is that a cubic algebra $\mathcal C$ has many gias and
% the canonical mapping from a gia to $\mathcal C/\simeq$ is always a weak morphism.

\begin{bibdiv}
\begin{biblist}
\bib{Abb}{article}{  
author={Abbott, J.C.}, 
title={Algebras of implication and semi-lattices},  
journal={S\'eminaire Dubreil. Alg\`ebre et th\'eorie des nombres}, 
volume={20}, 
pages={1--8}
}

\bib{Whi}{article}{ 
author={Vertigan, D.}, 
author={Whittle,  Geoff},  
title={Recognising polymatroids associated with hypergraphs},  
journal={Combinatorics, Probability and Computing}, 
volume={2}, 
date={1993}, 
pages={519--530}
}
\end{biblist}
\end{bibdiv}

\end{document}